\newtheorem{thm}{Theorem}[section]
\newtheorem{lemma}[thm]{Lemma}
\theoremstyle{definition}
\newtheorem{defi}{Definition}[section]
\def \de{\partial}
\def \rtri{\mathbb{R}^3}
\def \ep{\varepsilon}
\def \eps{\varepsilon}
\def \d{\mathrm{d}}
\def \n{\mathbf{n}}
\def \u{\mathbf{u}}
\def \x{\mathbf{x}}
\def \y{\mathbf{y}}
\def \ve{\mathbf{v}}
\def \z{\mathbf{z}}
\def \A{\mathbf{A}}
\def \B{\mathbf{B}}
\def \C{\mathbf{C}}
\def \de{\mathbf{D}}
\def \E{\mathbf{E}}
\def \G{\mathbf{G}}
\def \R{\mathbb{R}}
\def \S{\mathbf{S}}
\def \te{\mathbf{T}}
\def \U{\mathbf{U}}
\def \sil{\rightarrow}
\def \sl{\rightharpoonup}
\def\weaklyto{\rightharpoonup}
\renewcommand{\div}{{\rm div}\,}
\newcommand{\abs}[1]{\ensuremath{\left| #1 \right|}}
\newcommand{\norm}[1]{\ensuremath{\left\| #1 \right\|}}
\newcommand{\ds}{\displaystyle}
\newcommand{\beq}[1]{\begin{equation} \label{#1}\ds}
\newcommand{\eeq}{\end{equation}}
\newcommand{\bml}[1]{\beq{#1} \begin{array}{c}\ds}
\newcommand{\eml}{\end{array}\eeq}
\newcommand{\lr}[1]{\left( #1 \right)} 
\newcommand{\pd}[2]{{\partial_{#2} #1}} 
\def\grad{\nabla} 
\renewcommand{\t}[1]{\ensuremath{\mathbf{#1}}} 
\def \be{\B(\ve,\te)}
\def \zero{\boldsymbol{0}}
\def \er{\R}
\def \en{\mathbb{N}}
\def\sym{\mathrm{sym}}
\newcommand{\el}[1]{L^{#1}}
\newcommand{\elom}[1]{L^{#1}(\Omega)}
\newcommand{\we}[2]{W^{#1,#2}}
\newcommand{\eq}[1]{\[ \begin{split} #1 \end{split} \]}
\def\eldiv{\el{2}_{0,\div}}
\def\elsym{\el{2}_{\mathrm{sym}}}
\def\wediv{\we{1}{p}_{0,\div}}
\def\wesym{\we{1}{q}_{\mathrm{sym}}}
\def\ce{\mathcal{C}}
\def\cediv{\ce^\infty_{0, \div}\lr{\Omega}}
\newcommand{\bochnerlx}[2]{\ensuremath{\el{#1} \left(0,T; {#2} \right) }} 
\newcommand{\bochnerll}[2]{\bochnerlx{#1}{\elom{#2}}} 
\let\phi=\phiold 
\def\phi{\varphi}
\def\vpsi{\boldsymbol{\psi}}
\def\vphi{\boldsymbol{\phi}}
\newcommand{\duality}[2]{\left\langle #1, #2 \right\rangle}
\newcommand{\norml}[2]{\left\Vert #1 \right\Vert_{#2}} 
\def\dx{\,\d\x}
\def\dt{\,\d t}
\def\xim{\xi^m}
\def\tem{\te^m}
\def\temn{\te^{m,n}}
\def\dem{\de^m}
\def\vem{\ve^m}
\def\vemn{\ve^{m,n}}
\def\bem{\t{B}\lr{\vem, \tem}}
\def\unitball{\mathcal{B}_1}
\begin{document}
\title[On the regularized model of viscoelastic fluid] {On the global existence for a regularized model of viscoelastic non-Newtonian fluid}
\author{Ond\v rej Kreml, Milan Pokorn\'y and Pavel \v Salom} 
\thanks{O.K. acknowledges the support of the GA\v CR (Czech Science Foundation) project GA13-00522S in the general framework of RVO: 67985840.}

\keywords{viscoelastic fluid; non-Newtonian fluid; Lipschitz truncation; global existence}

\begin{abstract}
We study the generalized Oldroyd model with viscosity depending on the shear stress behaving like $\mu(\de) \sim \abs{\de}^{p-2}$ ($p>\frac 65$) regularized by a nonlinear stress diffusion. Using the Lipschitz truncation method we are able to prove global existence of weak solution to the corresponding system of partial differential equations.
\end{abstract}

\maketitle

\section{Introduction}\label{s:intro}
The well known Oldroyd model describing the flow of incompressible viscoelastic fluid consists of the following system of partial differential equations
\begin{equation}\label{eq:classic Oldroyd}
\begin{split}
\div \ve &= 0, \\
\pd{\ve}{t} + \div\lr{\ve \otimes \ve} &= -\grad \pi + \mu\Delta\ve + \div\te, \\
\te + \pd{\te}{t} + \ve\cdot\grad\te &= 2\mu_0\de + \t{W}\te - \te\t{W} + a\lr{\de\te + \te\de}. 
\end{split}
\end{equation}
Here the unknowns are the velocity vector $\ve$, the pressure $\pi$ and the symmetric extra stress tensor $\te$. The tensor $\de$ denotes the symmetric part of the velocity gradient $\de = \de(\ve) = \frac 12 (\grad\ve + (\grad\ve)^T)$ and $\t{W}$ denotes its skew--symmetric part $\t{W} = \t{W}(\ve) = \frac 12 (\grad\ve - (\grad\ve)^T)$, $\mu$ and $\mu_0$ are positive constants and $a \in [-1,1]$ is a real parameter. Special choices $a = -1,0,1$ yield respectively the lower convected (Oldroyd A), corotational and upper convected (Oldroyd B) models.

With the exception of the work of Lions and Masmoudi \cite{LiMa}, where the authors proved global existence of weak solutions for the corotational model, the global existence theory for the Oldroyd models is still an open problem. Existence of weak solutions to \eqref{eq:classic Oldroyd} for general $a$ is proved only under some smallness assumptions, either on the time interval or the initial data (see e.g.  \cite{FaHiZi}, \cite{KrPo}, \cite{LiLiZh}, \cite{LiWa}).

It is well known that some fluids as e.g. the blood exhibit both the viscoelastic and shear-thinning behavior. Therefore it is important to consider models which can describe these properties. 
In this paper we propose a generalized, and regularized, version of the Oldroyd system \eqref{eq:classic Oldroyd}. Namely, instead of a constant viscosity coefficient $\mu$ in \eqref{eq:classic Oldroyd}$_2$ we introduce shear dependent viscosity $\mu(\de)$ with properties specified later. This enables the model to describe better the shear thinning behavior of the fluid (or shear thickening, if needed). As even the model with constant viscosity is (except for a special case discussed above) not known to posses a weak solution, the least it can be expected for a more complex (and less regular) model. Hence we regularize  equation \eqref{eq:classic Oldroyd}$_3$ for the extra stress by introducing a (nonlinear) stress diffusion. Denoting
\begin{equation}\label{eq:B}
\be := \t{W}\te - \te\t{W} + a\lr{\de\te + \te\de}
\end{equation}
the system we study is the following
\begin{equation}\label{eq:strong formulation}
\begin{split}
\div\ve &= 0, \\
\pd{\ve}{t} + \div\lr{\ve \otimes \ve} + \grad \pi - \div(\mu(\de)\de) &= \div\te, \\
\pd{\te}{t} + \ve\cdot\grad\te - \eps \div(\gamma(\grad\te)\grad\te) + \te &= 2\mu_0\de + \be.
\end{split}
\end{equation}
Here $\ep$ is a positive  constant and the properties of functions $\mu(\de)$ and $\gamma(\nabla\te)$ are stated later.

We consider our system \eqref{eq:strong formulation} on a space-time cylinder $\Omega \times [0,T)$ where $\Omega \subset \rtri$ is a bounded domain with Lipschitz boundary and we add the initial conditions
\begin{equation}\label{eq:IC}
\begin{split}
\ve\lr{\x,0} &= \ve_0\lr{\x} \quad \textrm{in $\Omega$}, \\
\te\lr{\x,0} &= \te_0\lr{\x} \quad \textrm{in $\Omega$}
\end{split}
\end{equation}
and the boundary conditions
\begin{equation}\label{eq:BC}
\begin{split}
\ve &= \zero \quad \textrm{on $\partial\Omega \times [0,T)$}, \\
\frac{\partial\te}{\partial \n} &= \zero \quad \textrm{on $\partial\Omega \times [0,T)$}.
\end{split}
\end{equation}
We assume that the function $\mu: \R^{3\times 3} \sil \R^+$ satisfies the following conditions. For some $p>1$
\begin{itemize}
\item[(i)] $\mu\lr{\de}\de$ is $p$-coercive, i.e.
\begin{equation}\label{assum: p-coercivity for mu}
\exists c>0\ \exists\phi_1\in\el{1}\lr{\Omega\times \lr{0,T}}\ \forall\de\in\er^{3\times 3}_\sym: \quad \mu\lr{\de}|\de|^2 \geq c|\de|^p-\phi_1,
\end{equation}
\item[(ii)] $\mu\lr{\de}\de$ has $(p-1)$-growth, i.e.
\begin{equation}\label{assum: p-1 growth for mu}
\exists c>0\ \exists\phi_2\in\el{p'}\left(\Omega\times \lr{0,T}\right)\ \forall\de\in\er^{3\times 3}_\sym: \quad \mu(\de)|\de| \leq c|\de|^{p-1}+\phi_2,
\end{equation}
\item[(iii)] $\mu\lr{\de}\de$ is strictly monotone, i.e.
\begin{equation}\label{assum: monotonicity for mu}
\forall\de_1, \de_2\in\er^{3\times 3}_\sym, \, \de_1\neq\de_2
: \quad \left(\mu(\de_1)\de_1 - \mu(\de_2)\de_2\right):\left(\de_1-\de_2\right) > 0,
\end{equation}
\end{itemize}
and similarly $\gamma: \R^{3\times 3\times 3} \sil \R^+$ satisfies for some $q>1$
\begin{itemize}
\item[(iv)] $\gamma\lr{\grad\te}\grad\te$ is $q$-coercive, i.e.
\begin{equation}\label{assum: q-coercivity for gamma}
\exists c>0\ \exists\phi_3\in\el{1}\lr{ \Omega\times \lr{0,T} }\ \forall\te\in\er^{3\times 3}_\sym: \quad \gamma(\grad\te)|\grad\te|^2 \geq c|\grad\te|^q-\phi_3,
\end{equation}
\item [(v)] $\gamma\lr{\grad\te}\grad\te$ has $(q-1)$-growth, i.e.
\begin{equation}\label{assum: q-1 growth for gamma}
\exists c>0\ \exists\phi_4\in\el{q'}\lr{ \Omega\times \lr{0,T} }\ \forall\te\in\er^{3\times 3}_\sym: \quad \gamma(\grad\te)|\grad\te| \leq c|\grad\te|^{q-1}+\phi_4,
\end{equation}
\item[(vi)] $\gamma\lr{\grad\te}\grad\te$ is monotone, i.e.
\begin{equation}\label{assum: monotonicity for gamma}
\forall\te_1, \te_2\in\er^{3\times 3}_\sym 
: \quad \left(\gamma(\grad\te_1)\grad\te_1-\gamma(\grad\te_2)\grad\te_2\right):\left(\grad\te_1-\grad\te_2\right) \geq 0.
\end{equation}
\end{itemize}

This model was first introduced in \cite{KrTh} where existence of weak solutions for $p>\frac 85$ and $q$ sufficiently large was proven for the problem with either periodic boundary conditions or complete slip boundary conditions for the velocity $\ve$. The proof in \cite{KrTh} is based on the $L^\infty$ test functions technique developed by Frehse, M\'alek and Steinhauer in \cite{FrMaSt}. The case of Dirichlet boundary conditions for problem \eqref{eq:strong formulation} was studied in \cite{SaTh}, where existence of weak solutions is proved for $p > \frac 65$ and $q > 2p'$. The proof is based on the Lipschitz truncation method and the construction of local pressure from \cite{DiRuWo}. Moreover, in both cases, additional lower order nonlinear term was used in order to obtain suitable a-priori estimates. In this paper we further improve the condition on $q$,  under which the existence of weak solutions is proved, remove the additional lower-order term and based on the recent result for the Lipschitz truncation to the solenoidal functions (see \cite{BrDiSc}) we also significantly shorten the proof.

Similar model was studied in \cite{CoKl} where the authors consider classical Oldroyd-B model with constant viscosity $\mu$ and linear stress diffusion and prove global regularity of solutions in 2D.

In the whole text we denote vectors by small bold letters and tensors by capital bold letters. We introduce the following function spaces.
\eq{
\cediv &:= \left\{ \ve:\er^3\to\er^3 \mid \ve\in\ce_0^\infty\lr{\Omega}, \div \ve = 0 \right\}, \\
\eldiv(\Omega) &:= \overline {\left\{ \ve:\er^3\to\er^3 \mid \ve\in\cediv \right\}}^{\norm{\cdot}_{\el{2}\lr{\Omega}}}, \\
\wediv(\Omega) &:= \overline {\left\{ \ve:\er^3\to\er^3 \mid \ve\in\cediv \right\}}^{\norm{\cdot}_{\we{1}{p}\lr{\Omega}}}, \\
\elsym(\Omega) &:= \overline {\left\{ \te:\er^3\to\er^{3\times 3} \mid \te\in\ce^\infty\lr{\overline{\Omega}}, \textrm{$\te$ symmetric} \right\}}^{\norm{\cdot}_{\el{2}\lr{\Omega}}}, \\
\wesym(\Omega) &:= \overline {\left\{ \te:\er^3\to\er^{3\times 3} \mid \te\in\ce^\infty\lr{\overline{\Omega}}, \textrm{$\te$ symmetric} \right\}}^{\norm{\cdot}_{\we{1}{q}\lr{\Omega}}}.
}
Moreover, we denote $X^*$ the dual space to $X$ and by $\duality{T}{\phi}_{k,r}$ we mean duality between spaces $\we{k}{r}(\Omega)$ and $\lr{\we{k}{r}(\Omega)}^*$, similarly for duality between Sobolev space of solenoidal functions with zero trace (or symmetric tensors) and its dual we use $\duality{T}{\phi}_{k,r,\div}$ (or $\duality{T}{\phi}_{k,r,\sym}$ respectively). For $t > 0$ we denote $Q_t = \Omega\times (0,t)$ the space--time cylinder. For $s \in [1,\infty]$ we denote $s'$ its dual exponent, i.e. $\frac{1}{s} + \frac{1}{s'} = 1$.

Before defining weak solution of \eqref{eq:strong formulation} we denote 
\begin{equation}\label{eq:A}
\A(\te,\vpsi) = -\te\vpsi + \lr{\te\vpsi}^T + a\lr{\te\vpsi + \lr{\te\vpsi}^T} 
\end{equation}
and observe that integrating by parts and using the boundary condition \eqref{eq:BC}$_1$ it holds
\begin{equation}\label{eq:B to A}
\int_\Omega \be:\vpsi \dx\dt = - \int_\Omega \ve\cdot\div\A(\te,\vpsi) \dx\dt
\end{equation}
for all $\vpsi \in \ce^\infty\lr{\overline\Omega}$.
Note moreover that in our case $\vpsi$ and $\te$ are symmetric, therefore
$$
\A(\te,\vpsi) = \vpsi\te -\te\vpsi  + a\lr{\te\vpsi + \vpsi \te}. 
$$
\begin{defi}\label{def: definition of weak solution}
Let $\ve_0 \in \eldiv(\Omega)$, $\te_0 \in \elsym(\Omega)$ and let $\eps$, $\mu_0$, $T$ be positive constants. Let $\mu:\er^{3\times 3} \to \er^+$ be a continuous function satisfying \eqref{assum: p-coercivity for mu}--\eqref{assum: monotonicity for mu} with some $p > \frac 65$ and let $\gamma:\er^{3\times 3\times 3} \to \er^+$ be a continuous function satisfying \eqref{assum: q-coercivity for gamma}--\eqref{assum: monotonicity for gamma} with some $q > 1$. We say that a couple $(\ve, \te)$ is a \textit{weak solution of system \eqref{eq:strong formulation}} with initial conditions \eqref{eq:IC} and boundary conditions \eqref{eq:BC} if
\eq{
\ve &\in \bochnerlx{\infty}{\eldiv(\Omega)} \cap \bochnerlx{p}{\wediv(\Omega)}, \\
\te &\in \bochnerlx{\infty}{\elsym(\Omega)} \cap \bochnerlx{q}{\wesym(\Omega)}, \\
\pd{\ve}{t} &\in \bochnerlx{\sigma}{\lr{\we{1}{\sigma'}_{0,\div}(\Omega)}^*}, \quad \textrm{for some} \quad 1 \leq \sigma \leq \frac{5}{6}p, \\
\pd{\te}{t} &\in \bochnerlx{q'}{\lr{\wesym(\Omega)}^*}
}
and for almost all $t\in\lr{0,T}$ it holds 
\begin{equation}\label{eq: weak formulation - 1st equation}
\duality{\pd{\ve}{t}\lr{t}}{\vphi}_{1,\sigma',\div} 
+ \int_{\Omega} \Big(\mu\lr{\de\lr{t}}\de\lr{t} - \lr{\ve\otimes\ve}\Big) : \grad\vphi \dx = \int_{\Omega} \div \te\lr{t} \cdot \vphi \dx
\end{equation}
for all  $\vphi \in \cediv$,
\begin{equation}\label{eq: weak formulation - 2nd equation}
\begin{split}
\duality{\pd{\te}{t}\lr{t}}{\vpsi}_{1,q,\sym} + \int_{\Omega} \lr{\ve\lr{t}\cdot\grad\te\lr{t}} : \vpsi \dx \\ + \eps\int_{\Omega} \gamma\lr{\grad\te\lr{t}}\grad\te\lr{t} : \grad\vpsi \dx + \int_\Omega \te : \vpsi \dx \\
 = 
2\mu_0\int_\Omega \de\lr{t}: \vpsi\dx 
- \int_\Omega \ve\lr{t}\cdot\div\A\lr{\te\lr{t},\vpsi} \dx 
\end{split}
\end{equation}
for all $\vpsi \in \ce^\infty\lr{\overline{\Omega}}$, $\vpsi$ symmetric,
\begin{equation}\label{eq: weak formulation - 1st initial condition}
\begin{split}
\forall \vphi\in\cediv: \quad &\lim_{t\to0_+} \int_\Omega \ve\lr{t}\cdot\vphi\dx = \int_\Omega \ve_0 \cdot \vphi \dx
\end{split}
\end{equation}
\begin{equation}\label{eq: weak formulation - 2nd initial condition}
\begin{split}
\forall \vpsi\in\ce^\infty\lr{\overline{\Omega}}: \quad &\lim_{t\to0_+} \int_\Omega \te\lr{t} : \vpsi\dx = \int_\Omega \te_0 : \vpsi \dx
\end{split}
\end{equation}
and \eqref{eq:BC}$_1$ is fulfilled for almost all $t\in\lr{0,T}$ in the sense of traces.
\end{defi}

We are now ready to state the main theorem of this paper.
\begin{thm}\label{t:main}
Let $\Omega \subset \er^3$ be a bounded domain with Lipschitz boundary. Let $\ve_0 \in \eldiv(\Omega)$, $\te_0 \in \elsym(\Omega)$ and let $\eps$,  $\mu_0$, $T$ be given positive constants. Let $\mu:\er^{3\times 3} \to \er^+$ be a continuous function satisfying \eqref{assum: p-coercivity for mu}--\eqref{assum: monotonicity for mu} and let $\gamma:\er^{3\times 3\times 3} \to \er^+$ be a continuous function satisfying \eqref{assum: q-coercivity for gamma}--\eqref{assum: monotonicity for gamma}. Moreover, let
\begin{equation}\label{assum: assumption on exponents in Main theorem} 
\begin{array} {rl}
\displaystyle \frac{6}{5}< p\leq 2, &\qquad q \geq 4, \\
2<p, &\displaystyle \qquad q > \frac{2p}{p-1}.
\end{array}
\end{equation}
Then there exists a weak solution to system \eqref{eq:strong formulation} with initial conditions \eqref{eq:IC} and boundary conditions \eqref{eq:BC}.
\end{thm}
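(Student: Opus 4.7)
The plan is to prove existence via a Galerkin approximation combined with the solenoidal Lipschitz truncation of \cite{BrDiSc} and classical monotone-operator arguments. The four main stages are: (a) construction of finite-dimensional approximate solutions; (b) derivation of uniform a priori estimates and extraction of weak limits; (c) identification of the nonlinear viscosity limit $\mu(\de)\de$ using the Lipschitz truncation; (d) identification of $\gamma(\grad\te)\grad\te$ by a Minty argument, together with the limit passage in the remaining terms and the recovery of the initial conditions.

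For (a) I take a basis $\{\w_i\}$ of $\wediv(\Omega)\cap W^{k,2}(\Omega)$ orthonormal in $\eldiv(\Omega)$ and a basis $\{\z_j\}$ of symmetric tensors in $W^{k,2}(\Omega)$ satisfying the Neumann condition $\partial\z_j/\partial\n=\zero$, orthonormal in $\elsym(\Omega)$ (for instance, eigenfunctions of the Stokes system and of the Neumann Laplacian), with $k$ large enough that the bases lie in $W^{1,\infty}$. I look for approximations $\vem=\sum_{i=1}^m c_i^m(t)\w_i$ and $\tem=\sum_{j=1}^m d_j^m(t)\z_j$ satisfying the Galerkin projection of \eqref{eq:strong formulation}. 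Peano's theorem gives local-in-time solvability of the resulting ODE system; the bounds from (b) extend the solution to $[0,T]$.

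For (b), testing the discrete momentum equation by $\vem$ and the stress equation by $\tem$ and adding, the convective terms vanish by $\div\vem=0$, and the antisymmetric part of $\bem$ drops out of $\bem:\tem$ using symmetry of $\tem$. The remaining cross terms $\int_\Omega \dem:\tem\dx$ and $\int_\Omega \dem:(\tem)^2\dx$ are absorbed by Young's inequality together with the Morrey embedding $\wesym\hookrightarrow\el{\infty}(\Omega)$ (which holds since $q\geq 4>3$) interpolated with the $\bochnerlx{\infty}{\elsym(\Omega)}$ bound; exactly the range \eqref{assum: assumption on exponents in Main theorem} makes this closure work. This yields uniform bounds for $\vem$ in $\bochnerlx{\infty}{\eldiv(\Omega)}\cap\bochnerlx{p}{\wediv(\Omega)}$, for $\tem$ in $\bochnerlx{\infty}{\elsym(\Omega)}\cap\bochnerlx{q}{\wesym(\Omega)}$, and for their time derivatives in the corresponding duals (read off from the equations themselves). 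Weak and weak-$*$ compactness combined with the Aubin--Lions lemma produce $\vem\weaklyto\ve$ and $\tem\weaklyto\te$ weakly in the natural spaces and strongly in subcritical $\bochnerlx{r}{\el{s}(\Omega)}$. The nonlinear coupling $\bem$ passes to the limit in the reformulation \eqref{eq:B to A}, since $\vem\otimes\tem\to\ve\otimes\te$ in $\el{1}(Q_T)$ by strong convergence of both factors.

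Step (c) is the heart of the proof and the main expected obstacle. Because $p$ may be as small as $\frac{6}{5}$, the standard monotone-operator method does not apply: the convective term $\int \ve\otimes\ve:\grad\ve$ need not lie in $\el{1}(Q_T)$, so $\vem-\ve$ is not admissible as a test function. I apply the divergence-free Lipschitz truncation of \cite{BrDiSc} to $\vem-\ve$, obtaining a solenoidal $\bphi_{m,\lambda}\in W^{1,\infty}_{0,\div}(\Omega)$ coinciding with $\vem-\ve$ outside a bad set whose space-time measure vanishes as $\lambda\to\infty$ uniformly in $m$; the solenoidal variant avoids any pressure reconstruction, which is what shortens the proof compared to \cite{SaTh}. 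Testing the difference of the momentum equations for $\vem$ and $\ve$ by $\bphi_{m,\lambda}$ and carefully controlling all error terms---including the coupling $\int\div\tem\cdot\bphi_{m,\lambda}$, which is tame thanks to the strong $\bochnerlx{q}{\el{q}(\Omega)}$ bound on $\grad\tem$---leads to
\[
\limsup_{m\to\infty}\int_{Q_T}\lr{\mu(\dem)\dem-\mu(\de)\de}:(\dem-\de)\dX \leq 0.
\]
By the strict monotonicity \eqref{assum: monotonicity for mu} the integrand is non-negative, so (passing to a subsequence) it converges to zero a.e.\ in $Q_T$, and continuity of $\mu$ identifies $\overline{\mu(\de)\de}=\mu(\de)\de$ almost everywhere. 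For (d), the $\bochnerlx{q}{\el{q}(\Omega)}$ bound on $\grad\tem$ with $q$ large is strong enough to pass to the limit directly by a standard Minty argument using \eqref{assum: monotonicity for gamma}. Finally, the initial conditions \eqref{eq: weak formulation - 1st initial condition}--\eqref{eq: weak formulation - 2nd initial condition} follow from the weak time-continuity of $\ve$ and $\te$ into the duals where their time derivatives live. The main technical challenge throughout is to track the error terms arising from the Oldroyd coupling $\div\te$ carefully enough that the Lipschitz truncation argument closes in the full range of $p$ and $q$ stated in \eqref{assum: assumption on exponents in Main theorem}.
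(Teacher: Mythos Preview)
Your overall architecture---approximation, uniform bounds, Lipschitz truncation for the viscosity, Minty for the stress diffusion---matches the paper. However, there is a genuine gap in step (c): the Lipschitz truncation $\bphi_{m,\lambda}$ is \emph{not} an admissible test function in a Galerkin equation. The $m$-th Galerkin momentum equation only accepts test functions in $\mathrm{span}\{\w_1,\dots,\w_m\}$, whereas $\bphi_{m,\lambda}$ is a generic $W^{1,\infty}_{0,\div}$ function with no reason to lie in that span; projecting it back onto the Galerkin space introduces errors you cannot control, since $\bphi_{m,\lambda}$ itself depends on $m$ through $\vem$. Consequently you cannot ``test the difference of the momentum equations by $\bphi_{m,\lambda}$'' as written.

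The paper resolves this by inserting an intermediate approximating \emph{PDE} system in which the convective term is cut off, $\div\bigl(\vem\otimes\vem\,\xi^m(|\vem|)\bigr)$, and a harmless term $\tfrac{1}{m}|\tem_M|^{q-2}\tem$ is added to the stress equation. This auxiliary system is solved by Galerkin plus the ordinary Minty trick: the cut-off forces $\partial_t\vem\in L^{p'}\bigl(0,T;(W^{1,p}_{0,\div})^*\bigr)$, so that $\vem$ itself is an admissible test function in the limit and monotonicity identifies $\mu(\dem)\dem$ at this stage without any truncation machinery. Only at the second limit $m\to\infty$ does the real difficulty appear; but now both the approximate and the limit momentum equations are genuine PDEs admitting all $W^{1,\infty}_{0,\div}$ test functions, and the solenoidal Lipschitz truncation of \cite{BrDiSc} applies cleanly. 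A secondary point: the paper does not close the a~priori estimates via Morrey embedding. Instead it controls the mean $\tem_M$ separately (by integrating the stress equation over $\Omega$), splits $\tem=(\tem-\tem_M)+\tem_M$, and closes with Poincar\'e's inequality and Gronwall; this is precisely what yields the sharp ranges $q\ge 4$ for $p\le 2$ and $q>2p/(p-1)$ for $p>2$. A direct Morrey-based bound on $\int \dem:(\tem)^2$, taken literally, forces a larger $q$ when $p$ is close to $6/5$.
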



The rest of the paper is devoted to the proof of Theorem \ref{t:main}. The proof is organized as follows. In Section \ref{s:approx} we present the approximative system, show the existence of weak solutions to it and derive a-priori estimates. In Section \ref{s:limit1} we perform the first limiting procedure, pass to the limit in most of the terms in the equations and identify the main problem which we have to solve. In Section \ref{s:lip} we recall some recently proved properties of the Lipschitz truncation operator which is a key tool in the final step of the proof. Then we complete the proof.

\section{Approximation and a priori estimates}\label{s:approx}

Let $\xi \in \ce^\infty \lr{[0,\infty)}$ be a non-increasing function such that $\xi \equiv 1$ in $[0,1]$, $\xi \equiv 0$ in $[2,\infty)$ and $0 \geq \xi' \geq -2$. For $m \in \mathbb{N}$ we define
\eq{
\xim \lr{s} := \xi \lr{\frac{s}{m}}, \quad s \in [0,\infty).
}
We consider the approximative system (denoting $\dem = \de(\vem)$, $\tem_M(t) = \frac{1}{|\Omega|} \int_{\Omega} \tem(t,\cdot)\dx$)
\begin{equation}\label{eq:approx}
\begin{split}
\div\vem &= 0, \\
\pd{\vem}{t} + \div\lr{\vem \otimes \vem \xim\lr{\abs{\vem}}} + \grad \pi^m &- \div(\mu(\dem)\dem) = \div\tem, \\
\pd{\tem}{t} + \vem\cdot\grad\tem - \eps \div(\gamma(\grad\tem)\grad\tem) & \\
+\frac 1 m \abs{\tem_M}^{q-2}\tem + \tem &= 2\mu_0\dem + \bem
\end{split}
\end{equation}
with initial conditions \eqref{eq:IC} and boundary conditions \eqref{eq:BC}. 
\begin{defi}\label{d:approx}
By a {\em weak solution to  system \eqref{eq:approx}} we mean a couple $(\vem,\tem)$ such that
\eq{
\vem &\in \ce\lr{[0,T];\eldiv(\Omega)} \cap \bochnerlx{p}{\wediv(\Omega)}, \\
\tem &\in \ce\lr{[0,T];\elsym(\Omega)} \cap \bochnerlx{q}{\wesym(\Omega)}, \\
\pd{\vem}{t} &\in \bochnerlx{p'}{\lr{\we{1}{p}_{0,\div}(\Omega)}^*}, \\
\pd{\tem}{t} &\in \bochnerlx{q'}{\lr{\we{1}{q}_{\sym}(\Omega)}^*}
}
satisfying   
\begin{equation}\label{eq: approx weak formulation - 1st equation}
\begin{split}
\int_0^T \duality{\pd{\vem}{t}}{\vphi}_{1,p,\div} \dt
- \int_0^T \int_{\Omega} \lr{\vem\otimes\vem\xim\lr{\abs{\vem}}} : \grad\vphi \dx\dt \\
+ \int_0^T \int_{\Omega} \mu\lr{\dem}\dem : \grad\vphi \dx\dt = \int_0^T \int_{\Omega} \div \tem \cdot \vphi \dx\dt
\end{split}
\end{equation}
for all  $\vphi \in \bochnerlx{p}{\wediv(\Omega)}$,
\begin{equation}\label{eq: approx weak formulation - 2nd equation}
\begin{split}
\int_0^T \duality{\pd{\tem}{t}}{\vpsi}_{1,q} \dt + \int_0^T\int_{\Omega} \lr{\vem\cdot\grad\tem} : \vpsi \dx\dt  \\ + \eps\int_0^T\int_{\Omega} \gamma\lr{\grad\tem}\grad\tem : \grad\vpsi \dx\dt + \int_0^T\int_\Omega\lr{\frac 1 m \abs{\tem_M}^{q-2}\tem + \tem} : \vpsi \dx\dt \\
 = 
2\mu_0\int_0^T\int_\Omega \dem: \vpsi\dx\dt 
- \int_0^T\int_\Omega \vem\cdot\div\A(\tem,\vpsi)\dx\dt
\end{split}
\end{equation}
for all $\vpsi \in \bochnerlx{q}{\wesym(\Omega)}$.
\end{defi}

\begin{thm}\label{t:approx}
Let $\Omega \subset \er^3$ be a bounded domain with Lipschitz boundary. Let $\ve_0 \in \eldiv(\Omega)$, $\te_0 \in \elsym(\Omega)$ and let $\eps$, $\mu_0$, $T$ be positive constants, $m \in \en$. Let $\mu:\er^{3\times 3} \to \er^+$ be a continuous function satisfying \eqref{assum: p-coercivity for mu}--\eqref{assum: monotonicity for mu} and let $\gamma:\er^{3\times 3\times 3} \to \er^+$ be a continuous function satisfying \eqref{assum: q-coercivity for gamma}--\eqref{assum: monotonicity for gamma}. Moreover, let \eqref{assum: assumption on exponents in Main theorem} hold. Then there exists a weak solution of system \eqref{eq:approx} with initial conditions \eqref{eq:IC} and boundary conditions \eqref{eq:BC}.
\end{thm}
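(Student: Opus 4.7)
The plan is to prove Theorem~\ref{t:approx} by a standard Galerkin approximation. The truncation $\xim(|\vemn|)$ renders the convective term pointwise bounded, so at this approximation level the limiting argument is substantially easier than the passage $m\to\infty$ carried out in Section~\ref{s:limit1}; in particular, the extra term $\frac{1}{m}|\tem_M|^{q-2}\tem$ plays no special role here and is treated as a continuous lower-order perturbation.

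First, I would choose Galerkin bases $\{\w_i\}_{i\in\en}\subset\cediv$ dense in $\wediv(\Omega)$ (for instance eigenfunctions of the Stokes operator) and $\{\X_j\}_{j\in\en}\subset\ce^\infty(\overline{\Omega})$ of symmetric tensors dense in $\wesym(\Omega)$, and seek approximations
\eq{
\vemn(t,\x) = \sum_{i=1}^n c_i^n(t)\w_i(\x), \qquad \temn(t,\x) = \sum_{j=1}^n d_j^n(t)\X_j(\x)
}
solving the $n$-dimensional ODE system obtained by projecting~\eqref{eq: approx weak formulation - 1st equation}--\eqref{eq: approx weak formulation - 2nd equation} onto the span of the first $n$ basis elements, with $L^2$-projected initial data. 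Since $|\vemn\otimes\vemn\xim(|\vemn|)|\leq 4m^2$ and all norms on a finite-dimensional space are equivalent, the right-hand side of the ODE depends continuously on the coefficients $(c^n,d^n)$; Carath\'eodory's theorem produces a local solution, which the uniform a priori estimates below extend to $[0,T]$.

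Second, testing the discretized velocity equation with $\vemn$ and the stress equation with $\temn$ yields an energy identity. The convective contribution is not identically zero, but after integration by parts and use of $\div\vemn = 0$ it is $O(m)$ (one exploits $\supp\xim'\subset\{|\vemn|\leq 2m\}$ and $|\xim'|\leq 2/m$) and hence absorbable into the dissipation by Young's inequality. The coupling term $\int_\Omega \bem:\temn\dx$ is rewritten through~\eqref{eq:B to A} as $-\int_\Omega\vemn\cdot\div\A(\temn,\temn)\dx$ and controlled using $|\A(\te,\te)|\leq C|\te|^2$ together with the embedding $\wesym(\Omega)\vnor L^\infty(\Omega)$ (valid since~\eqref{assum: assumption on exponents in Main theorem} forces $q>3$ in both regimes). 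The coercivity assumptions~\eqref{assum: p-coercivity for mu} and~\eqref{assum: q-coercivity for gamma} together with Gronwall's lemma then furnish uniform-in-$n$ bounds
\eq{
\vemn\in L^\infty(0,T;\eldiv(\Omega))\cap L^p(0,T;\wediv(\Omega)), \quad \temn\in L^\infty(0,T;\elsym(\Omega))\cap L^q(0,T;\wesym(\Omega)),
}
together with time-derivative bounds $\partial_t\vemn\in L^{p'}(0,T;(\wediv(\Omega))^*)$ and $\partial_t\temn\in L^{q'}(0,T;(\wesym(\Omega))^*)$.

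Third, weak-$*$ compactness and the Aubin--Lions lemma supply strongly convergent subsequences $\vemn\to\vem$ in $L^2(Q_T)$ and $\temn\to\tem$ in $L^q(0,T;L^s(\Omega))$ for every $s<\infty$. Dominated convergence handles the truncated convective term (the pointwise bound $|\vemn\otimes\vemn\xim(|\vemn|)|\leq 4m^2$ supplies the required equiintegrability), while the monotone nonlinearities $\mu(\dem)\dem$ and $\gamma(\grad\tem)\grad\tem$ are identified by Minty's trick relying on~\eqref{assum: monotonicity for mu} and~\eqref{assum: monotonicity for gamma}. The main obstacle I anticipate is the coupling $\bem$: the product $\grad\vemn\cdot\temn$ contains no strongly compact factor a priori. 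The remedy is the regularity supplied by the stress diffusion: the embedding $\wesym(\Omega)\vnor L^\infty(\Omega)$ combined with Aubin--Lions yields strong convergence of $\temn$ in $L^r(0,T;L^\infty(\Omega))$ for some $r>p'$, which, together with the weak $L^p(Q_T)$-convergence of $\dem$, suffices to pass to the limit in $\bem$ and in the transported term $\vemn\cdot\div\A(\temn,\vpsi)$.
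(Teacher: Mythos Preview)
Your overall scheme---Galerkin approximation, energy estimates, Aubin--Lions compactness, Minty trick---matches the paper's proof, and your handling of the limit $n\to\infty$ (in particular the passage in $\bem$ via $\wesym(\Omega)\hookrightarrow L^\infty(\Omega)$) is fine. There is, however, a genuine gap in the a~priori estimate step.

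You write that the term $\tfrac{1}{m}|\tem_M|^{q-2}\tem$ ``plays no special role here and is treated as a continuous lower-order perturbation''. This is precisely where the argument breaks down if taken literally. After testing the two equations and using $\|\temn\|_{L^\infty(\Omega)}\leq C(|\temn_M|+\|\nabla\temn\|_q)$, the coupling term $\int_\Omega \bem:\temn\dx$ produces the contribution $C\|\vemn\|_2\,|\temn_M|\,\|\nabla\temn\|_q$. With only $E=\mu_0\|\vemn\|_2^2+\tfrac12\|\temn\|_2^2$, the bound $|\temn_M|\leq C\|\temn\|_2$, and Young's inequality, the best you obtain is
\[
\frac{\d E}{\d t} + \|\temn\|_2^2 + c\|\nabla\temn\|_q^q \;\leq\; C\bigl(1+E^{q/(q-2)}\bigr),
\]
which for $q=4$ gives $\d E/\d t\lesssim E^2$ and blows up in finite time. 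The damping term $\|\temn\|_2^2$ on the left does not save you: any attempt to absorb $\|\temn\|_2$ leads back to the same exponent $q/(q-2)>1$. What closes the estimate is the \emph{sign} of the regularizing term: testing by $\temn$ yields $\tfrac1m|\temn_M|^{q-2}\|\temn\|_2^2\geq \tfrac{|\Omega|}{m}|\temn_M|^q$ on the left, and this $q$-th power of the mean lets you split the bad product via three-term Young into $\epsilon\|\nabla\temn\|_q^q+\tfrac{c}{2m}|\temn_M|^q+C(m)E$, after which Gronwall applies. The paper does exactly this (it additionally tests the Galerkin stress equation by the constant basis element $\W^1$ to isolate the $|\temn_M|^q$ contribution cleanly, which is why the choice $\W^1\propto\mathbf{I}$ is made), and explicitly notes that the resulting bounds depend on $m$.

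Two minor remarks. First, the truncated convective term actually vanishes identically when tested by $\vemn$: writing $F'(s)=\xim(\sqrt{s})$ one has $\int_\Omega v_i v_j\xim(|\vemn|)\partial_i v_j\,\dx=\tfrac12\int_\Omega\vemn\cdot\nabla F(|\vemn|^2)\,\dx=0$ by $\div\vemn=0$ and the zero trace; there is no $O(m)$ residual to absorb. Second, for the Minty argument at this level (fixed $m$) the limit $\vem$ \emph{is} an admissible test function in the momentum equation thanks to the truncation (the convective term lies in $L^{p'}(Q_T)$), so the identification $\S=\mu(\de)\de$ is indeed standard here---your sketch is correct on that point.
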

\begin{proof}[Proof of Theorem \ref{t:approx}]
The existence is proved using the standard Galerkin method. We look for the approximations in the form 
$$
\begin{array}{c}
\vemn(t,\x) = \sum_{i=1}^n c_i^n(t) {\mathbf w}^i(\x), \\
\temn(t,\x) = \sum_{i=1}^n d_i^n(t) {\mathbf W}^i(\x),
\end{array}
$$
where $\{{\mathbf w}^i(x)\}_{i=1}^\infty$ is an orthogonal system in $W^{1,2}_{0,\div}(\Omega)$, orthonormal in $L^2(\Omega)$ formed by smooth functions and $\{{\mathbf W}^i(x)\}_{i=1}^\infty$ is an orthogonal system in $W^{1,2}_{\mathrm{sym}}(\Omega)$, orthonormal in $L^2(\Omega)$ formed by smooth functions  such that  ${\mathbf W}^1(\x) = \frac{1}{\sqrt{3|\Omega|}}{\mathbf 1}_{\Omega}(\x)\mathbf{I}$, where ${\mathbf 1}_{\Omega}(\x)$ is the characteristic function of the domain $\Omega$ and $\mathbf{I}$ denotes the 3x3 identity matrix. The definition of the Galerkin approximation is standard. Local in time existence for solution to the Galerkin approximation is a direct consequence of the theory of the systems of ODEs. The solutions are global as soon as we are able to prove certain a priori estimates. If we ``test" the Galerkin approximation for $\vemn$ by $2\mu_0 \vemn$ and the approximation for $\temn$ by $\temn$ and add the formulation for $\temn$ ``tested" by ${\mathbf W}^1$ (i.e. integrated over $\Omega$) and multiplied by $\temn_M$, we get
\begin{equation} \label{ineq:Galerkin}
\begin{split}
& \frac 12 \frac{\textrm{d}}{\textrm{dt}} \Big(2\mu_0 \norml{\vemn}{2}^2 +  \norml{\temn}{2}^2 +   \sqrt{\frac{\abs{\Omega}}{3}}\abs{\temn_M}^2\Big) \\
& +  2\mu_0 c \norml{\grad\vemn}{p}^p + \eps c  \norml{\grad\temn}{q}^q + \frac{c}{m}  \abs{\temn_M}^{q} \\
& \leq C(t) + c \int_\Omega |\vemn||\nabla \temn|(|\temn| + |\temn_M|)\dx.
\end{split}
\end{equation}
Under assumptions of the main theorem, it is an easy matter to estimate the integral on the right-hand side and we get control of norms coming from the left-hand side. The procedure is similar to estimates in Theorem \ref{t:estim}, only slightly easier. Note, however, that the control depends on $m$; indeed, it is independent of $n$. Next, by duality argument, we also prove  estimates of the time derivatives of $\vemn$ and $\temn$ as stated in Definition \ref{d:approx} and using the Aubin-Lions lemma we get strong convergence of $\vemn$ and $\temn$ in $L^2((0,T)\times \Omega)$. The proof of Theorem \ref{t:approx} is completed by the standard monotonicity argument (the Minty trick, see also Section \ref{s:limit1}) and the density argument to extend the class of the test functions. 
\end{proof}

\begin{thm}\label{t:estim}
Under the assumptions of Theorem \ref{t:approx}, for any $m \in \en$, we have
the following estimates
\begin{equation}\label{ineq: a priori estimate}
\begin{split}
\norm{\vem}_{\bochnerll{\infty}{2}} &+ \norm{\tem}_{\bochnerll{\infty}{2}} + \norm{\grad\vem}_{\bochnerll{p}{p}} + \norm{\grad\tem}_{\bochnerll{q}{q}} \leq c
\end{split}
\end{equation}
with $c$ independent of $m$.
\end{thm}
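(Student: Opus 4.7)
The plan is to test \eqref{eq: approx weak formulation - 1st equation} with the admissible choice $\vphi = 2\mu_0\vem$ and \eqref{eq: approx weak formulation - 2nd equation} with $\vpsi = \tem$, sum the two resulting identities, and control the remaining nonlinear obstruction by Young's inequality and Gronwall's lemma.

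First I verify the standard cancellations. The truncated convective term vanishes: with $\Psi(s) = \tfrac12\int_0^s \xim(\sqrt{\tau})\,\d\tau$ one has $\int_\Omega \div(\vem\otimes\vem\,\xim(|\vem|))\cdot\vem\,\d\x = -\int_\Omega \Psi(|\vem|^2)\,\div\vem\,\d\x = 0$. The pressure term and the stress-equation transport term $\tfrac12\int_\Omega \vem\cdot\grad|\tem|^2\,\d\x$ vanish by $\div\vem=0$ and $\vem|_{\partial\Omega}=\bnul$. The cross couplings $-2\mu_0\int_\Omega\tem:\dem\,\d\x$ (from the velocity side, using $\tem=\tem^T$) and $+2\mu_0\int_\Omega\dem:\tem\,\d\x$ (from the stress side) cancel exactly, while the regularising contribution $\tfrac1m|\tem_M|^{q-2}\|\tem\|_2^2$ is non-negative and stays on the left. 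Combined with the coercivity hypotheses \eqref{assum: p-coercivity for mu} and \eqref{assum: q-coercivity for gamma}, this yields
\begin{equation*}
\tfrac{\d}{\d t}\bigl(\mu_0\|\vem\|_2^2 + \tfrac12\|\tem\|_2^2\bigr) + 2\mu_0 c\|\dem\|_p^p + \eps c\|\grad\tem\|_q^q + \|\tem\|_2^2 \leq \|\phi_1\|_1 + \|\phi_3\|_1 + I,
\end{equation*}
where $I := \int_\Omega \bem:\tem\,\d\x$.

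The crux is the control of $I$. Using \eqref{eq:B to A} and the symmetry of $\tem$ one computes $\A(\tem,\tem) = 2a\tem^2$ (the skew-symmetric pieces cancel), hence integration by parts and $\vem|_{\partial\Omega}=\bnul$ give $|I| \leq C\int_\Omega |\vem|\,|\grad\tem|\,|\tem|\,\d\x$. I would then apply H\"older's inequality in space with three factors, tuned to the two subcases of \eqref{assum: assumption on exponents in Main theorem}. For $\tfrac65 < p \leq 2$ and $q\geq 4$ the Sobolev embedding $\we{1}{q}(\Omega)\hookrightarrow L^\infty(\Omega)$ is available, and with exponents $(2,q,\tfrac{2q}{q-2})$ one obtains $\int|\vem||\grad\tem||\tem| \leq \|\vem\|_2\|\grad\tem\|_q\|\tem\|_{2q/(q-2)}$, followed by the interpolation $\|\tem\|_{2q/(q-2)} \leq C\|\tem\|_2^{(q-2)/q}\bigl(\|\tem\|_2+\|\grad\tem\|_q\bigr)^{2/q}$. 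In the second subcase, $p>2$ and $q>2p/(p-1)$, one uses in addition the embedding $\we{1}{p}(\Omega)\hookrightarrow L^{p^*}(\Omega)$ on $\vem$ and the analogous three-factor H\"older split.

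The main obstacle, and the whole purpose of the exponent hypothesis \eqref{assum: assumption on exponents in Main theorem}, is to check that after this H\"older--interpolation step the resulting powers of $\|\grad\tem\|_q$ and $\|\dem\|_p$ are strictly sub-critical, so that Young's inequality allows one to move arbitrarily small multiples of $\|\grad\tem\|_q^q$ and $\|\dem\|_p^p$ back to the left-hand side, leaving a right-hand side whose non-dissipative pieces depend only on $\|\vem\|_2$, $\|\tem\|_2$ (both dominated by the energy $y(t) := \mu_0\|\vem(t)\|_2^2 + \tfrac12\|\tem(t)\|_2^2$) and on time-integrable quantities. A Gronwall argument applied to $y$ then yields a bound on $\sup_{[0,T]} y$ depending only on $T$, $\eps$, $\mu_0$, $\|\phi_1\|_1$, $\|\phi_3\|_1$ and the initial data $\|\ve_0\|_2$, $\|\te_0\|_2$, and in particular \emph{not} on $m$. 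Re-integrating the surviving differential inequality over $(0,T)$ produces the $L^p$-in-time-space bound on $\grad\vem$ and the $L^q$-in-time-space bound on $\grad\tem$ asserted in \eqref{ineq: a priori estimate}.
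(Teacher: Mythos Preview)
Your cancellation analysis is correct up to and including the reduction $|I|\le C\int_\Omega|\vem|\,|\grad\tem|\,|\tem|\,\d\x$. The gap is in the closing step: the summed differential inequality does \emph{not} admit a Gronwall argument. With $q=4$ (the threshold) your H\"older--interpolation scheme gives, in the worst case,
\[
|I|\le C\,\|\vem\|_2\,\|\tem\|_2^{1/2}\,\|\grad\tem\|_q^{3/2}.
\]
Young's inequality absorbs $\|\grad\tem\|_q^{3/2}$ into $\eps c\|\grad\tem\|_q^4$ at the cost of $C_\delta\|\vem\|_2^{8/5}\|\tem\|_2^{4/5}$; a second Young absorbing $\|\tem\|_2^{4/5}$ into the $\|\tem\|_2^2$ term on the left leaves $C\|\vem\|_2^{8/3}$. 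In terms of $y(t)=\mu_0\|\vem\|_2^2+\tfrac12\|\tem\|_2^2$ this is $y'\lesssim 1+y^{4/3}$, which blows up in finite time. A direct computation shows the exponent on $y$ stays strictly above $1$ for every finite $q$, so no choice of splitting within this scheme closes the estimate.

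The paper does \emph{not} sum the two energy identities. It keeps them separate and adds a third relation obtained by integrating \eqref{eq:approx}$_3$ over $\Omega$ and multiplying by the spatial mean $\tem_M$; this yields $\|\tem_M\|_{L^\infty(0,s)}\le C\bigl(1+\int_0^s\|\vem\|_2\|\grad\tem\|_q\,\d t\bigr)$. Splitting $|\tem|\le|\tem-\tem_M|+|\tem_M|$ in the trilinear term and applying Poincar\'e to the first piece turns the stress inequality into a bound $\|\grad\tem\|_{L^q(0,s;L^q)}\le C\bigl(1+(\int_0^s\|\vem\|_2^2)^{1/(q-2)}\bigr)$. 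Feeding this back into the velocity inequality gives
\[
\|\vem(s)\|_2^2\le C\Bigl(1+\Bigl(\int_0^s\|\vem\|_2^2\,\d t\Bigr)^{\frac12+\frac{1}{q-2}}\Bigr),
\]
and \emph{this} is where the hypothesis $q\ge4$ enters: it makes the exponent $\le1$, so Gronwall applies. The extra test by $\tem_M$ is the missing idea in your proposal.
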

\begin{proof}[Proof of Theorem \ref{t:estim}]

For fixed $m$, we proceed similarly as above. However, we cannot use the information from the term $\frac 1m |\tem_M|^{q-2} \tem$, hence the estimates are slightly more complex. Unlike the Galerkin approximation, we keep the estimates separately. We will not subtract two terms, however, they are of lower order and do not cause any troubles.  
We test equation \eqref{eq:approx}$_2$ by $\vem$ and get
\begin{equation} \label{ineq:velocity}
\frac 12\int_0^s \frac{\textrm{d}}{\textrm{dt}} \norml{\vem}{2}^2 \dt +  c \int_0^s \norml{\grad\vem}{p}^p \dt \leq \int_0^s\int_\Omega \phi_1 \dx\dt +\int_0^s \int_\Omega |\vem| |\nabla \tem| \dx\dt.
\end{equation}
Next we
 test equation \eqref{eq:approx}$_3$ by $\tem$ and have 
\begin{equation}\label{ineq:stress}
\begin{split}
&\frac{1}{2} \int_0^s \frac{\textrm{d}}{\textrm{dt}} \norml{\tem}{2}^2 \dt  
+ \eps  \int_0^s \norml{\grad\tem}{q}^q \dt  + \int_0^s \norml{\tem}{2}^2 \dt \\
&\leq \int_0^s\int_\Omega \phi_3 \dx\dt + \int_0^s \int_\Omega |\vem| |\grad \tem| \dx\dt +\int_0^s\int_\Omega |\vem||\tem||\nabla\tem| \dx\dt.
\end{split}
\end{equation}
The information about $\tem$ on the left-hand side is not sufficient, as it is in the second power. We therefore use the same trick as before; we integrate \eqref{eq:approx}$_3$ over $\Omega$ and multiply it by $\tem_M$. Hence
\begin{equation} \label{ineq:mean}
\frac 12  \int_0^s \frac{\textrm{d}}{\textrm{dt}} |\tem_M|^2 \dt + \int_0^s|\tem_M|^2\dt  \leq c \int_0^s |\tem_M| \int_\Omega |\vem||\nabla \tem|\dx\dt. 
\end{equation}
We first take $p\leq 2$. Then we have from \eqref{ineq:mean}
$$
\|\tem_M\|_{L^\infty(0,s)} \leq C\Big(1+ \int_0^s \|\vem\|_{\elom{2}} \norm{\grad\tem}_{\elom{q}}\dt\Big).
$$
Hence, writing the last term in \eqref{ineq:stress} as
\begin{multline*}
\int_0^s \int_\Omega |\vem||\tem| |\nabla \tem|\dx\dt \\ \leq
\int_0^s \int_\Omega |\vem||\tem-\tem_M| |\nabla \tem|\dx\dt +
\int_0^s |\tem_M| \int_\Omega |\vem||\nabla \tem|\dx\dt,
\end{multline*}
we have using Poincar\'e's inequality
\begin{multline*}
\norm{\grad\tem}_{L^{q}(0,s;\elom{q})}^q \leq C\Big(1+ \Big(\int_0^s \|\vem\|_{\elom{2}}^2\dt\Big)^{\frac 12} \Big(\int_0^s\|\grad\tem\|_{\elom{q}}^q\dt\Big)^{\frac 1q}  
\\ +\Big(\int_0^s \|\vem\|_{\elom{2}}^2\dt\Big)^{\frac 12} \Big(\int_0^s\|\grad\tem\|_{\elom{q}}^q\dt\Big)^{\frac 2q} \\
+ \Big(\int_0^s \|\vem\|_{\elom{2}}^2\dt\Big) \Big(\int_0^s\|\grad\tem\|_{\elom{q}}^q\dt\Big)^{\frac 2q}
\Big).
\end{multline*}
Therefore
$$
\norm{\grad\tem}_{\bochnerll{q}{q}} \leq C\Big(1+ \Big(\int_0^s \|\vem\|_{\elom{2}}^2\dt\Big)^{\frac 1{q-2}}\Big) .
$$
Finally, from \eqref{ineq:velocity}
$$
\|\vem(s,\cdot)\|_{L^2(\Omega)}^2 \leq C\Big(1+ \Big(\int_0^s \|\vem\|_{L^2(\Omega)}^2\dt\Big)^{\frac 12 +\frac{1}{q-2}}\Big),
$$
which leads to the estimate by virtue of the Gronwall lemma  provided $q\geq 4$. For $p>2$ we use the $L^p$-norm of the velocity gradient. Proceeding similarly as above we get
$$
\norm{\grad \vem}_{\bochnerll{p}{p}}^p \leq C(1+ (\norm{\grad \vem}_{\bochnerll{p}{p}})^{1+\frac{2}{q-2}})
$$
which gives the required a-priori estimates provided $q> \frac{2p}{p-1}$. 
\end{proof}


\section{Limiting procedure I}\label{s:limit1}

In what follows, we consider only the more interesting case $p\leq 2$. The other case can be proved similarly, we only need to work with different spaces corresponding to the a-priori estimates. 
\begin{lemma}\label{l:conv}
Let $\left\{(\vem,\tem)\right\}_{m=1}^{\infty}$ be a sequence of weak solutions of \eqref{eq:approx}. Then there exists a subsequence (not relabeled) such that
\begin{eqnarray}
\label{conv: final lp - weak convergence of v^m} \vem &\weaklyto& \ve \quad \text{weakly in} \ \bochnerlx{p}{\wediv(\Omega)}, \\
\label{conv: final lp - weak convergence of v^m 2} \vem &\weaklyto^*& \ve \quad \text{weakly$^*$ in} \ \bochnerll{\infty}{2}, \\
\label{conv: final lp - weak convergence of T^m} \tem &\weaklyto& \te \quad \text{weakly in} \ \bochnerlx{q}{\wesym(\Omega)}, \\
\label{conv: final lp - weak convergence of T^m 2} \tem &\weaklyto^*& \te \quad \text{weakly$^*$ in} \ \bochnerll{\infty}{2}, \\
\label{conv: final lp - weak convergence of muD^m} \mu\lr{\dem}\dem &\weaklyto& \S \quad \text{weakly in} \ \bochnerll{p'}{p'}, \\
\label{conv: final lp - weak convergence of gammaT^m} \gamma\lr{\nabla\tem}\nabla\tem &\weaklyto& \U \quad \text{weakly in} \ \bochnerll{q'}{q'}, \\
\label{conv: final lp - weak convergence of v^m in L^5/3} \vem &\weaklyto& \ve \quad \text{weakly in} \ \bochnerll{\frac{5}{3}p}{\frac{5}{3}p},
\end{eqnarray}
\begin{eqnarray}
\label{conv: final lp - weak convergence of time derivative of v^m} \pd{\vem}{t} &\weaklyto& \pd{\ve}{t} \quad \text{weakly in} \ \bochnerlx{\sigma}{\lr{\we{1}{\sigma'}_{0,\div}(\Omega)}^*}, \quad 1\leq\sigma \leq \frac{5}{6}p, \\
\label{conv: final lp - weak convergence of time derivative of T^m} \pd{\tem}{t} &\weaklyto& \pd{\te}{t} \quad \text{weakly in} \ \bochnerlx{q'}{\lr{\wesym(\Omega)}^*}, \\
\label{conv: final lp - strong convergence of v^m 1} \vem &\to& \ve \quad \text{strongly in} \ \bochnerll{p}{r}, \quad 1\leq r < \frac{3p}{3-p},\\
\label{conv: final lp - strong convergence of T^m} \tem &\to& \te \quad \text{strongly in} \ \bochnerll{q}{\overline{r}}, \quad 1\leq \overline{r} \leq \infty,\\
\label{conv: final lp - strong convergence of v^m 2} \vem &\to& \ve \quad \text{strongly in} \ \bochnerll{2\sigma}{2\sigma}, \quad 1\leq 2\sigma < \frac{5}{3}p,\\
\label{conv: final lp - strong convergence of v^m otimes v^m} \vem \otimes \vem\xim\lr{\abs{\vem}} &\to& \ve\otimes\ve \quad \text{strongly in} \ \bochnerll{\sigma}{\sigma}, \quad 1\leq\sigma < \frac{5}{6}p, \\
\label{conv: final l1 - strong convergence of regularization} \frac 1m |\tem_M|^{q-2}\tem  &\to& {\mathbf 0} \quad \text{strongly in} \ \bochnerll{1}{1}.
\end{eqnarray}
\end{lemma}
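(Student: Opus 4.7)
The plan is to extract all the stated convergences from the uniform bounds of Theorem \ref{t:estim} together with the growth assumptions on $\mu$ and $\gamma$, duality estimates for the time derivatives, and the Aubin--Lions--Simon lemma; at this stage $\S$ and $\U$ are merely weak limits, and their identification with $\mu(\de)\de$ and $\gamma(\nabla\te)\nabla\te$ (the real difficulty of the paper) is deferred to subsequent sections based on the Lipschitz truncation.

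First I would note that \eqref{conv: final lp - weak convergence of v^m}--\eqref{conv: final lp - weak convergence of T^m 2} follow immediately from \eqref{ineq: a priori estimate} by Banach--Alaoglu, together with weak-$*$ compactness in $L^\infty_t L^2_x = (L^1_t L^2_x)^*$ for the starred statements. The growth bounds \eqref{assum: p-1 growth for mu} and \eqref{assum: q-1 growth for gamma} then give uniform bounds on $\mu(\dem)\dem$ in $L^{p'}(Q_T)$ and $\gamma(\nabla\tem)\nabla\tem$ in $L^{q'}(Q_T)$, yielding \eqref{conv: final lp - weak convergence of muD^m} and \eqref{conv: final lp - weak convergence of gammaT^m} after passing to subsequences. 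For \eqref{conv: final lp - weak convergence of v^m in L^5/3}, a standard parabolic interpolation combining $\vem\in L^\infty_tL^2_x$ with $\vem\in L^p_t L^{3p/(3-p)}_x$ (by Sobolev embedding) produces a uniform $L^{5p/3}(Q_T)$ bound; the assumption $p>6/5$ is precisely what guarantees $5p/3>2$, so that the $L^\infty_t L^2_x$ endpoint is meaningful.

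The time derivative bounds \eqref{conv: final lp - weak convergence of time derivative of v^m} and \eqref{conv: final lp - weak convergence of time derivative of T^m} come from testing the weak formulations and tracking the most singular contribution. For $\partial_t\vem$ this is the convective term $\vem\otimes\vem\xim(|\vem|)$, bounded in $L^{5p/6}(Q_T)$ thanks to $|\xim|\leq 1$ and the previous interpolation, which forces the choice $\sigma=\tfrac{5p}{6}$; the viscous and stress-divergence contributions sit in more regular dual spaces. For $\partial_t\tem$ each term on the right-hand side of \eqref{eq:approx}$_3$ pairs against a $\wesym(\Omega)$ test function to give an $L^{q'}$-in-time bound, where the $\A$-term uses $\nabla\vpsi\in L^q$ together with control of $\vem$, $\tem$ in complementary Lebesgue spaces coming from $q\geq 4$. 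With these bounds I would then apply Aubin--Lions--Simon with the compact embeddings $\wediv(\Omega)\hookrightarrow\hookrightarrow L^r(\Omega)$ for $r<3p/(3-p)$ and $\wesym(\Omega)\hookrightarrow\hookrightarrow C(\overline{\Omega})$ (using $q\geq 4>3$) to obtain \eqref{conv: final lp - strong convergence of v^m 1} and \eqref{conv: final lp - strong convergence of T^m}; interpolation between \eqref{conv: final lp - strong convergence of v^m 1} and \eqref{conv: final lp - weak convergence of v^m in L^5/3} yields \eqref{conv: final lp - strong convergence of v^m 2}.

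For \eqref{conv: final lp - strong convergence of v^m otimes v^m}, I would decompose
\[
\vem\otimes\vem\,\xim(|\vem|) - \ve\otimes\ve = (\vem-\ve)\otimes\vem\,\xim(|\vem|) + \ve\otimes(\vem-\ve)\,\xim(|\vem|) + \ve\otimes\ve\,(\xim(|\vem|)-1),
\]
controlling the first two terms in $L^\sigma(Q_T)$ via \eqref{conv: final lp - strong convergence of v^m 2} and $|\xim|\leq 1$, and handling the last by dominated convergence since $\xim(|\vem|)\to 1$ pointwise almost everywhere on a further subsequence (extracted from the $L^1(Q_T)$-strong convergence of $\vem$). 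Finally, \eqref{conv: final l1 - strong convergence of regularization} follows from the Galerkin-level bound $\tfrac{1}{m}\int_0^T|\tem_M|^q\dt\leq C$ preserved in the limit $n\to\infty$, which implies $\|\tfrac{1}{m}|\tem_M|^{q-2}\|_{L^{q/(q-2)}(0,T)}\leq C m^{-2/(q-2)}\to 0$, and combining this with the uniform $L^\infty_t L^2_x$ bound on $\tem$ via H\"older's inequality. The only real bookkeeping obstacle here is the duality estimate for $\partial_t\vem$, which fixes the exponent $\sigma=\tfrac{5p}{6}$ appearing throughout the statement and later in the whole proof.
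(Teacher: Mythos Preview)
Your proposal is correct and follows essentially the same route as the paper's proof. The only notable differences are cosmetic: for \eqref{conv: final lp - strong convergence of v^m otimes v^m} the paper combines a.e.\ convergence of $\vem$ with a uniform $L^\sigma$ bound and Vitali's theorem instead of your explicit three-term decomposition, and for \eqref{conv: final l1 - strong convergence of regularization} it uses the simpler observation that the uniform $L^\infty_tL^2_x$ bound on $\tem$ already forces $\|\tem_M\|_{L^\infty(0,T)}\le C$ independently of $m$, giving $\tfrac1m|\tem_M|^{q-2}\tem\to\zero$ at rate $1/m$ without invoking the Galerkin-level $q$-th power bound (incidentally, your exponent there should be $m^{-2/q}$, not $m^{-2/(q-2)}$, though this does not affect the conclusion).
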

\begin{proof}[Proof of Lemma \ref{l:conv}]
Convergences \eqref{conv: final lp - weak convergence of v^m}--\eqref{conv: final lp - weak convergence of T^m 2} are direct consequences of a-priori estimates \eqref{ineq: a priori estimate}. Convergences \eqref{conv: final lp - weak convergence of muD^m} and \eqref{conv: final lp - weak convergence of gammaT^m} are achieved combining  a-priori estimate \eqref{ineq: a priori estimate} with \eqref{assum: p-1 growth for mu} and \eqref{assum: q-1 growth for gamma} respectively. Interpolating between $L^\infty(0,T;L^2(\Omega))$ and $L^p(0,T;\wediv(\Omega))$ yields \eqref{conv: final lp - weak convergence of v^m in L^5/3}. 

Next we want to prove \eqref{conv: final lp - weak convergence of time derivative of v^m}. To this aim it is enough to prove a priori bound for the time derivative of $\vem$ in $\bochnerlx{\sigma}{\lr{\we{1}{\sigma'}_{0,\div}}^*}$. We denote 
\eq{
\unitball := \left\{\vphi\in\bochnerlx{\sigma'}{\we{1}{\sigma'}_{0,\div}} , \norm{\vphi}_{\bochnerlx{\sigma'}{\we{1}{\sigma'}_{0,\div}}}\leq 1 \right\}
}
and using \eqref{eq: approx weak formulation - 1st equation} we estimate
\eq{
&\norm{\pd{\vem}{t}}_{\bochnerlx{\sigma}{\lr{\we{1}{\sigma'}_{0,\div}}^*}} = \sup_{\vphi\in\unitball} \abs{ \int_0^T \duality{\pd{\vem}{t}}{\vphi}_{1,\sigma',\div} \dt } \\
&= \sup_{\vpsi\in\unitball} \abs{\int_{Q_T} \lr{ \vem\otimes\vem\xim\lr{\abs{\vem}} - \mu\lr{\dem}\dem - \tem }:\grad\vphi\ \dx\dt} \\
&\leq \sup_{\vpsi\in\unitball} \Bigg( \norm{\vem}_{L^{2\sigma}(Q_T)}^2 + c\norm{\dem}_{L^{(p-1)\sigma}(Q_T)}^{p-1} + \norm{\phi_2}_{L^{\sigma}(Q_T)} + \norm{\tem}_{L^{\sigma}(Q_T)} \Bigg) \norm{\grad\vphi}_{L^{\sigma'}(Q_T)} \\
&\leq c\lr{1 + \norm{\vem}_{\bochnerll{\frac{5}{3}p}{\frac{5}{3}p}}^2 + \norm{\dem}_{\bochnerll{(p-1)\sigma}{(p-1)\sigma}}^{p-1} + \norm{\tem}_{\bochnerll{2}{2}}} \\
&\leq c\lr{1 + \norm{\vem}_{\bochnerll{\infty}{2}}^2 + \norm{\grad\vem}_{\bochnerll{p}{p}}^2 + \norm{\dem}_{\bochnerll{p}{p}}^{p-1} + \norm{\tem}_{\bochnerll{q}{q}}} \\
&\leq c;
}
we used $\sigma \leq \frac{5}{6}p < 2 $ when we estimated $\norm{\phi_2}$ and $\norm{\tem}$ in $\bochnerll{\sigma}{\sigma}$ and $\norm{\dem}$ in $\bochnerll{\lr{p-1}\sigma}{\lr{p-1}\sigma}$. 
Convergence \eqref{conv: final lp - weak convergence of time derivative of v^m} now follows easily. In the same way we derive also \eqref{conv: final lp - weak convergence of time derivative of T^m} using \eqref{eq: approx weak formulation - 2nd equation}. Having estimates for time derivatives, convergences \eqref{conv: final lp - strong convergence of v^m 1} and \eqref{conv: final lp - strong convergence of T^m} are direct consequences of Aubin--Lions lemma. Convergence \eqref{conv: final lp - strong convergence of v^m 2} follows from \eqref{conv: final lp - weak convergence of v^m in L^5/3} and \eqref{conv: final lp - strong convergence of v^m 1} with $r=p$, by interpolation.
Further, \eqref{conv: final lp - strong convergence of v^m 2} implies that $\vem(\x,t) \sil \ve(\x,t)$ a.e. in $Q_T$, which together with the uniform estimate
\eq{
\int_{Q_T} \abs{ \vem\otimes\vem\xim\lr{\abs{\vem}} }^\sigma \dx\dt \leq \int_{Q_T} \abs{\vem}^{2\sigma} \dx\dt \leq c
}
and a combination of Lebesgue's and Vitali's theorem imply the strong convergence \eqref{conv: final lp - strong convergence of v^m otimes v^m}. Finally, \eqref{conv: final l1 - strong convergence of regularization} is a direct consequence of the bound $\frac{1}{m} \|\tem_M\|_{L^\infty(0,T)}^{q-1} \leq C$ and the estimates above.
\end{proof}

Now we pass to the limit in  equation \eqref{eq: approx weak formulation - 2nd equation} for the extra stress tensor $\te$. For this reason fix a test function $\vpsi \in \bochnerlx{q}{\wesym(\Omega)}$. Using Lemma \ref{l:conv} we claim that we can pass to the limit in all terms in equation \eqref{eq: approx weak formulation - 2nd equation}. For example in the convective term 
\eq{\int_{Q_T}\vem\cdot\nabla\tem:\vpsi \dx\dt}
we use strong convergence \eqref{conv: final lp - strong convergence of v^m 2} and weak convergence $\grad\tem \sl \grad\te$ in $L^q(Q_T)$ keeping in mind that $\frac{1}{2\sigma} + \frac{2}{q} \leq 1$ as $q \geq 4$. Exactly the same argument applies also for the terms
\eq{\int_{Q_T}\vem\cdot\div\A\lr{\tem,\vpsi} \dx\dt}
and thus the limit equation is
\begin{equation}\label{eq:limit 2nd equation}
\begin{split}
\int_0^T \duality{\pd{\te}{t}}{\vpsi}_{1,q} \dt + \int_0^T\int_{\Omega} \lr{\ve\cdot\grad\te} : \vpsi \dx\dt  \\ + \eps\int_0^T\int_{\Omega} \U : \grad\vpsi \dx\dt + \int_0^T\int_\Omega \te : \vpsi \dx\dt \\
 = 2\mu_0\int_0^T\int_\Omega \de: \vpsi\dx\dt - \int_0^T\int_\Omega \ve\cdot\div\A(\te,\vpsi)\dx\dt
\end{split}
\end{equation}
for all $\vpsi \in \bochnerlx{q}{\wesym(\Omega)}$. Finally we have to show that
\begin{equation}\label{eq:U je spravne}
\U = \gamma(\nabla\te)\nabla\te.
\end{equation}
To this end we use $\tem$ as a test function in \eqref{eq: approx weak formulation - 2nd equation} and $\te$ as a test function in \eqref{eq:limit 2nd equation}. Note that these are both suitable test functions in the equations. Comparing the results we get
\eq{
\limsup_{m\to\infty} \int_{Q_T} \gamma\lr{\grad\tem}\grad\tem : \grad\tem \dx\dt = \int_{Q_T} \U : \grad\te \dx\dt.
}
Using the monotonicity assumption for $\gamma$ \eqref{assum: monotonicity for gamma} we get
\eq{
0 \leq \int_{Q_T} \lr{ \gamma\lr{\grad\tem}\grad\tem - \gamma\lr{\grad\C}\grad\C} : \lr{\grad\tem - \grad\C} \dx\dt}
and taking the $\limsup$ also
\eq{
0 \leq \int_{Q_T} \lr{ \U - \gamma\lr{\grad\C}\grad\C } : \lr{\grad\te - \grad\C} \dx\dt}
which holds for all $\C \in \bochnerlx{q}{\wesym(\Omega)}$. Thus we can plug in $\C = \te + \beta\E$ for $\beta \in \R$ and any $\E \in \bochnerlx{q}{\wesym(\Omega)}$. Then letting $\beta \sil 0_+$ and $\beta \sil 0_-$ and using the continuity of the function $\gamma$ we finally arrive at
\eq{
0 = \int_{Q_T} \lr{ \U - \gamma\lr{\grad\te}\grad\te } : \E \dx\dt}
which yields \eqref{eq:U je spravne}.

Next we pass to the limit in the momentum equation \eqref{eq: approx weak formulation - 1st equation}. Here we fix a test function $\vphi \in \bochnerlx{\sigma'}{\we{1}{\sigma'}_{0,\div}(\Omega)}$ with $\sigma'$ being a dual exponent to $\sigma \in (1,\frac{5p}{6}\big]$. Using convergences stated in Lemma \ref{l:conv} we can pass to the limit in all terms of \eqref{eq: approx weak formulation - 1st equation} and arrive at
\begin{equation}\label{eq:limit 1st equation incomplete}
\begin{split}
\int_0^T \duality{\pd{\ve}{t}}{\vphi}_{1,p,\div} \dt
- \int_0^T \int_{\Omega} \lr{\ve\otimes\ve} : \grad\vphi \dx\dt \\
+ \int_0^T \int_{\Omega} \S : \grad\vphi \dx\dt = \int_0^T \int_{\Omega} \div \te \cdot \vphi \dx\dt
\end{split}
\end{equation}
for all $\vphi \in \bochnerlx{\sigma'}{\we{1}{\sigma'}_{0,\div}(\Omega)}$. It remains to show that 
\begin{equation}\label{eq:S je spravne}
\S = \mu(\de)\de. 
\end{equation}

\section{Lipschitz truncation and limiting procedure II} \label{s:lip}
 
To prove \eqref{eq:S je spravne}  is not as easy as showing \eqref{eq:U je spravne}. Recall that in  this case $\ve$ is not a suitable test function in the limit equation \eqref{eq:limit 1st equation incomplete}. The rest of the paper is thus devoted to the proof \eqref{eq:S je spravne}.

In order to show the convergence
$$
\mu(\de^m)\de^m \to \mu(\de)\de \qquad \mbox{ a.e. in } Q_T
$$
(i.e. $\S = \mu(\de)\de$), we will use  Theorem 2.16 and Corollary 2.17 from \cite{BrDiSc}. We first introduce certain notation. For $\alpha >0$ we say that $Q = I\times B\subset\er\times \er^3$ is an $\alpha$-parabolic cylinder, if $r_I = \alpha r_B^2$,  where $r_I$ is the radius of the interval $I$ and $r_B$ the radius of the ball $B$. By ${\mathcal Q}^\alpha$ we denote the set of all $\alpha$-parabolic cylinders. For $\kappa>0$ we denote $\kappa Q$ the scaled cylinder $\kappa Q = (\kappa I) \times (\kappa B)$, where $\kappa B$ is the scaled ball with the same center, similarly $\kappa I$. Then $\alpha$-parabolic maximal operators ${\mathcal M}^\alpha$ and ${\mathcal M}^\alpha_s$, $s\in [1,\infty)$ are defined
$$
\begin{array}{c}
\displaystyle ({\mathcal M}^\alpha f)(t,\x) := \sup_{Q' \in {\mathcal Q}^\alpha; (t,\x) \in Q'} \frac{1}{|Q'|} \int_{Q'} |f(s,\y)|\, {\mathrm d}s \, {\mathrm d}\y, \\
\displaystyle ({\mathcal M}^\alpha_s f)(t,\x):= \Big(({\mathcal M}^\alpha |f|^s)(t,\x)\Big)^{\frac 1s}.
\end{array}
$$ 
For $\lambda$, $\alpha>0$ and $\sigma>1$ we define
$$
 {\mathcal O}^\alpha_\lambda (\z) := \Big\{ (t,\x); ({\mathcal M}^\alpha_\sigma(\xi_{\frac 13 Q_0}|\grad^2 \z|)>\lambda \cap ({\mathcal M}^\alpha_\sigma(\xi_{\frac 13 Q_0}|\partial_t \z|)>\lambda\Big\}.
$$
Note that $\z \sim \grad^{-1}\u$; for more precise definition of $\z$ see the proof of Theorem 2.16 in \cite{BrDiSc}. 

We have (see Theorem 2.16 and Corollary 2.17 in \cite{BrDiSc}) 
\begin{thm} \label{4.1}
Let $1<p<\infty$, $p$, $p'>\sigma$. Let $\u_m$ and $\G_m$ satisfy
$$
\langle \partial_t \u_m,\vphi \rangle = \langle\div \G_m,\vphi\rangle
$$
for all $\vphi \in C^\infty_{0,\rm{div}}(Q_0)$, $Q_0= I_0 \times B_0 \subset \er \times \er^3$. Assume that $\u_m$ is a weak null sequence in $L^p(I_0;W^{1,p}(B_0))$ and a strong null sequence in $L^\sigma(Q_0)$ and bounded in $L^\infty(I_0;L^\sigma(B_0))$. Further assume that $\G_m = \G_{1,m} + \G_{2,m}$ such that $\G_{1,m}$ is a weak null sequence in $L^{p'}(Q_0)$ and $\G_{2,m}$ converges strongly to zero in $L^\sigma(Q_0)$. Then there exists a double sequence $\{\lambda_{m,k}\}\subset \er^+$ and $k_0 \in \en$ with  
\begin{itemize}
\item[(a)] $2^{2^k} \leq \lambda_{m,k} \leq 2^{2^{k+1}}$ \newline
such that the double sequence $\u_{m,k} := \u_{\lambda_{m,k}}^{\alpha_{m,k}} \in L^1(Q_0)$, $\alpha_{m,k} := \lambda_{m,k}^{2-p}$ and ${\mathcal O}_{m,k}:={\mathcal O}_{\lambda_{m,k}}^{\alpha_{m,k}}$ defined above satisfy for all $k\geq k_0$
\item[(b)] $\u_{m,k} \in L^s(\frac 14 I_0;W^{1,s}_{0,\rm{div}}(\frac 16 B_0))$ for all $s <\infty$ and $\operatorname{supp}\, \u_{m,k} \subset \frac 16 Q_0$
\item[(c)] $\u_{m,k} = \u_m$ a.e. on $\frac 18 Q_0 \setminus {\mathcal O}_{m,k}$
\item[(d)] $\|\nabla \u_{m,k} \|_{L^\infty(\frac 14 (Q_0))} \leq c \lambda_{m,k}$
\item[(e)] $\u_{m,k} \to 0$ in $L^\infty(\frac 14 Q_0)$ for $m \to \infty$ and $k$ fixed
\item[(f)] $\nabla \u_{m,k} \rightharpoonup^* 0$ in $L^\infty(\frac 14 Q_0)$ for $m \to \infty$ and $k$ fixed
\item[(g)] $\limsup _{m\to \infty} \lambda^p_{m,k} |{\mathcal O}_{m,k}| \leq c 2^{-k}$
\item[(h)] $\limsup_{m\to \infty}\Big| \int_{Q_0}\G_m: \nabla\u_{m,k}\dx\dt\Big| \leq c \lambda_{m,k}^p |{\mathcal O}_{m,k}|$
\item[(i)] Additionally, let $\zeta \in C^\infty_0(\frac 16 Q_0)$ with $\chi_{\frac 18 Q_0} \leq \zeta \leq \chi_{\frac 16 Q_0}$. Let $\u_m$ be uniformly bounded in $L^\infty(I_0;L^\sigma(B_0))$, then for every $\mathbf{K} \in L^{p'}(\frac 16 Q_0)$
$$
\limsup_{m\to \infty} \Big| \Big(\int_{Q_0}(\G_{1,m}+\mathbf{K}):\nabla \u_m\Big)\zeta \chi_{{\mathcal O}_{m,k}^C}\dx\dt\Big| \leq c 2^{-\frac kp}
$$   
\end{itemize}
\end{thm}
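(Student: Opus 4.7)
The plan is to construct $\u_{m,k}$ by a parabolic Lipschitz truncation: modify $\u_m$ on a ``bad'' set where the parabolic maximal function of an associated vector potential is large, then choose the truncation level $\lambda_{m,k}$ by pigeonholing in dyadic annuli so that the excess measure is summably small. Since $\u_m$ is solenoidal, on $\tfrac 13 Q_0$ one has a vector potential $\z$ with $\u_m \sim \curl \z$, and on the complement $(\mathcal{O}^\alpha_\lambda)^c$ the bound $\mathcal{M}^\alpha_\sigma(|\grad^2\z|)\leq \lambda$ together with a parabolic Poincar\'e/Campanato argument shows that $\z$ is $\lambda$-Lipschitz in the $\alpha$-parabolic metric $d_\alpha((t,\x),(s,\y))=|\x-\y|+\alpha^{1/2}|t-s|^{1/2}$. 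The scaling $\alpha=\lambda^{2-p}$ is the natural choice balancing the spatial and temporal contributions under the $p$-scaling of the equation.

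Next I would cover $\mathcal{O}^\alpha_\lambda$ by a Whitney-type decomposition into $\alpha$-parabolic cylinders $\{Q_j\}$ whose radii are comparable to their $\alpha$-parabolic distance from the good set, with a subordinate partition of unity $\{\eta_j\}$. Define $\u_\lambda^\alpha$ by patching $\z$ on the good set with its affine averages on each $Q_j$, localize by a cutoff supported in $\tfrac 16 Q_0$, take the curl to return to the velocity level, and restore the divergence-free condition by a localized Bogovskii corrector on each Whitney cylinder. Properties (b)--(f) then follow from bookkeeping: (b) and (c) from the construction and the localization; (d) from the Whitney scaling combined with the Lipschitz bound on $\z$; (e) and (f) because for fixed $k$ the level $\lambda_{m,k}$ is bounded, $\u_m\to 0$ strongly in $L^\sigma$, and the Whitney averages inherit this convergence.

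The heart of the matter lies in (g), (h), and the refinement (i). For (g), the weak $L^p$-bound $\lambda^p|\mathcal{O}^\alpha_\lambda|\lesssim \int_{Q_0}|\grad^2\z|^p+\alpha^{p/2}|\partial_t \z|^{p/2}\,\dx\dt$ gives only a uniform constant; the decay factor $2^{-k}$ is extracted by a pigeonhole argument, namely within each dyadic range $[2^{2^k},2^{2^{k+1}}]$ there are roughly $2^k$ disjoint sub-annuli, so at least one level $\lambda_{m,k}$ in that range captures only a $2^{-k}$ fraction of the total energy. For (h), one tests the PDE $\partial_t \u_m=\div \G_m$ against $\u_{m,k}$: on the good set $\u_{m,k}=\u_m$ and the weak null/strong null splitting $\G_m=\G_{1,m}+\G_{2,m}$ yields a vanishing contribution, while on $\mathcal{O}_{m,k}$ the bound $|\grad \u_{m,k}|\leq c\lambda_{m,k}$ combined with the measure estimate (g) closes the argument. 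The genuinely hardest point is the refinement (i), which controls the analogous expression with $\u_m$ in place of $\u_{m,k}$. The device is to write $\u_m=\u_{m,k}+(\u_m-\u_{m,k})$, exploit that the difference vanishes on $\mathcal{O}_{m,k}^c\cap \tfrac 18 Q_0$, and use a Bogovskii correction of the difference together with the uniform $L^\infty L^\sigma$ control to trade regularity for the sharper $2^{-k/p}$ decay. This decay is precisely the ingredient that will drive the Minty identification $\S=\mu(\de)\de$ after successively sending $m\to\infty$ and then $k\to\infty$.
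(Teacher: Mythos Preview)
The paper does not prove this theorem at all: it is quoted verbatim as Theorem~2.16 and Corollary~2.17 of \cite{BrDiSc} and used as a black box. Your sketch is therefore not comparable to the paper's own argument, because there is none; what you have written is an outline of the construction carried out in \cite{BrDiSc}.

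That said, your outline is broadly faithful to the strategy of \cite{BrDiSc}: the passage to a vector potential~$\z$ on a slightly smaller cylinder so that $\u_m=\curl\z$, the definition of the bad set via the $\alpha$-parabolic maximal function of $\grad^2\z$ and $\partial_t\z$, the Whitney covering of $\mathcal{O}^\alpha_\lambda$ with a subordinate partition of unity, the replacement of $\z$ by local polynomial averages on Whitney cylinders followed by taking curl and a Bogovski\u{\i} correction to restore solenoidality, and the pigeonhole selection of $\lambda_{m,k}\in[2^{2^k},2^{2^{k+1}}]$ to extract the $2^{-k}$ decay in~(g) --- all of this is correct in spirit. Two places where your sketch is a bit loose: first, in~(h) the argument is not literally ``testing the PDE against $\u_{m,k}$'' in the usual sense, since $\u_{m,k}$ is only Lipschitz in space and one has to handle the time derivative carefully via the Whitney structure and the equation on each cylinder; second, in~(i) the key point is not merely the decomposition $\u_m=\u_{m,k}+(\u_m-\u_{m,k})$ but rather that on $\mathcal{O}_{m,k}^c$ one can replace $\grad\u_m$ by $\grad\u_{m,k}$ and then invoke~(h) together with H\"older's inequality and the bound $\lambda_{m,k}^p|\mathcal{O}_{m,k}|\leq c\,2^{-k}$ to obtain the $2^{-k/p}$ rate. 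These are the technically delicate steps in \cite{BrDiSc}, and a full proof would need to make them precise.
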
 

We apply this theorem to our problem \eqref{eq:approx}; cf. Theorem 3.1 in \cite{BrDiSc}. We denote $\u_m = \ve_m - \ve$.  Then
$$
\begin{array}{c}
\u_m \rightharpoonup {\mathbf 0} \quad \mbox{ in } L^p(0,T;W^{1,p}_{0,\textrm{div}}(\Omega)), \\
\u_m \to {\mathbf 0} \quad \mbox{ in }L^{2\sigma}(Q_T), \\
\u_m \rightharpoonup^* {\mathbf 0} \quad \mbox{ in }L^\infty(0,T;L^2(\Omega)),  
\end{array}
$$ 
see \eqref{conv: final lp - weak convergence of v^m}, \eqref{conv: final lp - strong convergence of v^m 2} and \eqref{conv: final lp - weak convergence of v^m 2}. Further
$$
\int_0^T \int_{\Omega} \u_m \cdot \partial_t \vphi \dx\dt= \int_0^T \int_\Omega \G_m :\nabla \vphi\dx\dt
$$
for all $\vphi \in C^\infty_{0,\infty}(Q_T)$, where $\G_m = \G_{1,m}+ \G_{2,m}$ with
$$
\begin{array}{c}
\G_{1,m} = \mu(\de_m)\de_m-\S, \\
\G_{2,m} = -\ve_m \otimes \ve_m \xi(|\ve_m|) + \ve\otimes \ve + \te^m-\te.
\end{array}
$$
We have $\|\G_{1,m}\|_{L^{p'}(0,T;L^{p'}(\Omega))} \leq C$ and $\G_{2,m} \to \mathbf{0}$ in $L^{\sigma_1}(Q_T)$ with $\sigma_1 = \min\{2\sigma,\overline{r}\}$, see \eqref{ineq: a priori estimate}, \eqref{conv: final lp - strong convergence of T^m} and \eqref{conv: final lp - strong convergence of v^m otimes v^m}.

Take now $Q\subset\subset(0,T)\times \Omega$. Due to properties mentioned above, assumptions of Theorem \ref{4.1} (i) are fulfilled. Hence,  plugging in $\mathbf{K} = \S - \mu(\de)\de$ we have for $\zeta \in C^\infty_0(\frac 16 Q_0)$ 
$$
\limsup_{m\to \infty}\Big|\int_0^T \int_{\Omega}
\Big(\G_{1,m} + \S - \mu(\de)\de \Big):\nabla \u_m \zeta \chi_{{\mathcal O}_{m,k}^C}\dx\dt \Big| \leq C 2^{-\frac kp}.
$$
Therefore 
$$
\limsup_{m\to \infty}\Big|\int_0^T \int_{\Omega}
\Big(\mu(\de_m)\de_m - \mu(\de)\de \Big):\nabla \u_m \zeta \chi_{{\mathcal O}_{m,k}^C}\dx\dt \Big| \leq C 2^{-\frac kp}.
$$
Take $\theta \in (0,1)$. By virtue of H\"older's inequality and Theorem \ref{4.1} (g)
\begin{multline*}
\limsup_{m\to \infty}\Big|\int_0^T \int_{\Omega}
\Big((\mu(\de_m) \de_m - \mu(\de)\de ):\nabla \u_m\Big)^\theta \zeta \chi_{{\mathcal O}_{m,k}}\dx\dt \Big| \\
\leq C\limsup_{m\to \infty} |{\mathcal O}_{k,m}|^{1-\theta} \leq C 2^{-(1-\theta)\frac kp}.
\end{multline*}
Thus, both estimates imply
$$
\limsup_{m\to \infty}\Big|\int_0^T \int_{\Omega}
\Big((\mu(\de_m) \de_m - \mu(\de)\de ):\nabla \u_m\Big)^\theta \zeta\dx\dt \Big| \leq   C 2^{-(1-\theta)\frac kp}.
$$
Taking $\lim_{k\to \infty}$ the right-hand side tends to zero. Using now standard approach from \cite{DM} (see also \cite{FrMaSt} or with more details \cite{KrTh}), due to the strict monotonicity of $\mu$, see \eqref{assum: monotonicity for mu}, we get $\mu(\de_m)\de_m \to \mu(\de)\de$ a.e. in $\frac 18 Q_T$ which implies that $\S = \mu(\de)\de$. The proof of the main theorem is finished.

\bigskip

\noindent {Ond\v{r}ej Kreml} \\
Institute of Mathematics  of the Academy of Sciences of the Czech Republic\\ 
\v{Z}itn\'a 25\\
115 65 Praha 1\\
Czech Republic\\
{\tt kreml@math.cas.cz}

\smallskip

\noindent Milan Pokorn\'y \\
Charles University in Prague, Faculty of Mathematics and Physics \\
Mathematical Institute of Charles University \\
Sokolovsk\'a 83 \\
186 75 Praha 8 \\
Czech Republic \\
{\tt pokorny@karlin.mff.cuni.cz}

\smallskip

\noindent Pavel \v Salom \\
Charles University in Prague, Faculty of Mathematics and Physics \\
Department of Mathematics Education\\
Sokolovsk\'a 83 
186 75 Praha 8 \\
Czech Republic \\
{\tt pavel.salom@gmail.com}

\end{document}